\newcommand{\andSep}{\,\,\,\text{ and }\,\,\,}
\newcommand{\axiomO}[1]{(O#1)}
\newcommand{\NN}{{\mathbb{N}}}
\newcommand{\ca}{$C^*$-algebra}
\newcommand{\CuSgp}{$\mathrm{Cu}$-sem\-i\-group}
\DeclareMathOperator{\Cu}{Cu}
\def\today{\number\day\space\ifcase\month\or   January\or February\or
   March\or April\or May\or June\or   July\or August\or September\or
   October\or November\or December\fi\   \number\year}
\newtheorem{lma}{Lemma}[section]
\newaliascnt{thmCt}{lma}
\newtheorem{thm}[thmCt]{Theorem}
\newaliascnt{corCt}{lma}
\newtheorem{cor}[corCt]{Corollary}
\newaliascnt{prpCt}{lma}
\newtheorem{prp}[prpCt]{Proposition}
\theoremstyle{definition}
\newaliascnt{dfnCt}{lma}
\newaliascnt{rmkCt}{lma}
\newtheorem{rmk}[rmkCt]{Remark}
\newcounter{theoremintro}
\newaliascnt{thmIntroCt}{theoremintro}
\newtheorem{thmIntro}[thmIntroCt]{Theorem}
\numberwithin{equation}{section}
\title{The Global Glimm Property for C*-algebras of topological dimension zero}
\author{Ping Wong Ng}
\address{Ping Wong Ng,
Department of Mathematics, University of Louisiana at Lafayette, 
P. O. Box 43568, Lafayette, LA, 70504–3568, USA.}
\email{png@louisiana.edu}
\author{Hannes Thiel}
\address{Hannes~Thiel, 
Department of Mathematical Sciences, Chalmers University of Technology and University of
Gothenburg, Gothenburg SE-412 96, Sweden.}
\email{hannes.thiel@chalmers.se}
\urladdr{www.hannesthiel.org}
\author{Eduard Vilalta}
\address{Eduard~Vilalta, 
Department of Mathematical Sciences, Chalmers University of Technology and University of
Gothenburg, Gothenburg SE-412 96, Sweden.}
\email{vilalta@chalmers.se}
\urladdr{www.eduardvilalta.com}
\thanks{
HT and EV were partially supported by the Knut and Alice Wallenberg Foundation (KAW 2021.0140). 
HT was also partially supported by the Swedish Research Council (VR) project grant 2024-04200.
EV was also partially supported by MINECO (grant No. PID2023-147110NB-I00) and by the Comissionat per Universitats i Recerca de la Generalitat de Catalunya (grant No. 2021 SGR 01015)
}
\subjclass[2020]%
{Primary
46L05; 
Secondary
19K14, 
46L80, 
46L85. 
}
\keywords{$C^*$-algebras, Global Glimm Property, nowhere scattered, Cuntz semigroups}
\date{\today}
\begin{document}

\begin{abstract}
We show that a \ca{} with topological dimension zero has the Global Glimm Property (every hereditary subalgebra contains an almost full nilpotent element) if and only if it is nowhere scattered (no hereditary subalgebra admits a finite-dimensional representation).
This solves the Global Glimm Problem in this setting.

It follows that nowhere scattered \ca{s} with finite nuclear dimension and topological dimension zero are pure. 
\end{abstract}

\maketitle

\section{Introduction}

The Global Glimm Problem asks if two relevant regularity properties ---known as \emph{nowhere scatteredness} and the \emph{Global Glimm Property}--- agree. 
The problem can be traced back to the pioneering study of purely infinite \ca{s} by Kirchberg and R\o{}rdam \cite{KirRor02InfNonSimpleCalgAbsOInfty}, and plays a central role in the study of nonsimple algebras; 
see, for example, \cite{AntPerThiVil24arX:PureCAlgs,BlaKir04GlimmHalving,EllRor06Perturb,RobTik17NucDimNonSimple}.

Concretely, a \ca{} $A$ is said to be \emph{nowhere scattered} if no hereditary sub-\ca{} of~$A$ admits a finite-dimensional, irreducible representation \cite{ThiVil24NowhereScattered}, while $A$ has the \emph{Global Glimm Property} if every hereditary sub-\ca{} contains an almost full square-zero element (in symbols, for every $a\in A_+$ and $\varepsilon>0$ there exists $r\in\overline{aAa}$ such that $r^2=0$ and $(a-\varepsilon)_+$ is in the ideal generated by $r$).

The Global Glimm Property always implies nowhere scatteredness, and the Global Glimm Problem asks if the converse is also true. 
This is known to hold under the additional assumption of real rank zero \cite{EllRor06Perturb}, or stable rank one \cite{AntPerRobThi22CuntzSR1}, or Hausdorff finite-dimensional primitive ideal space \cite{BlaKir04GlimmHalving}, or finite decomposition rank (\cite[Theorem~2.3]{EnsVil25arX:ZstableTwGp}).

Recently, a general framework to approach this problem was given in \cite{ThiVil23Glimm} employing Cuntz semigroup techniques. 
Making use of the tools developed in \cite{ThiVil23Glimm}, we solve the Global Glimm Problem for a large class of \ca{s}: 

\begin{thmIntro}[\ref{prp:GGP}]
\label{prp:thmIntro}
Let $A$ be a \ca{} with topological dimension zero. 
Then~$A$ has the Global Glimm Property if and only if $A$ is nowhere scattered.
\end{thmIntro}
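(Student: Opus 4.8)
The plan is to establish the nontrivial implication: if $A$ is nowhere scattered and has topological dimension zero, then $A$ has the Global Glimm Property (the converse always holds, as noted above). The first step is to transport everything to the Cuntz semigroup. By \cite{ThiVil23Glimm}, the Global Glimm Property of $A$ is equivalent to an abstract ``Glimm halving'' condition on $\Cu(A)$: for all $x',x\in\Cu(A)$ with $x'\ll x$ there is $y\in\Cu(A)$ with $2y\le x$ and $x'\le\infty y$. On the other hand, by \cite{ThiVil24NowhereScattered}, nowhere scatteredness of $A$ is equivalent to $\Cu(A)$ being weakly $(2,\omega)$-divisible: for all $x'\ll x$ there are $y_1,\dots,y_n\in\Cu(A)$ with $2y_i\le x$ for each $i$ and $x'\le y_1+\dots+y_n$. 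Thus the entire task reduces to a purely Cuntz-semigroup-theoretic statement: \emph{weak $(2,\omega)$-divisibility upgrades to Glimm halving in the presence of topological dimension zero} --- that is, using that $\operatorname{Prim}(A)$ has a basis of compact-open sets, equivalently that every finite open cover of an open subset of $\operatorname{Prim}(A)$ refines to a finite cover by pairwise disjoint open sets.

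Now fix $x'\ll x$ and choose $y_1,\dots,y_n$ as provided by weak $(2,\omega)$-divisibility. Applying the support map $\operatorname{supp}\colon\Cu(A)\to\operatorname{Open}(\operatorname{Prim}(A))$ to $x'\le y_1+\dots+y_n$ yields $\operatorname{supp}(x')\subseteq U_1\cup\dots\cup U_n=:Y$, where $U_i=\operatorname{supp}(y_i)$. By topological dimension zero, the finite open cover $\{U_1,\dots,U_n\}$ of the open set $Y$ refines to a finite disjoint open cover; regrouping its members according to which $U_i$ contains them, we obtain pairwise disjoint open sets $W_1,\dots,W_n$ with $W_i\subseteq U_i$ and $W_1\cup\dots\cup W_n=Y$. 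Let $J_i\trianglelefteq A$ be the ideal with $\operatorname{Prim}(J_i)=W_i$; these are pairwise orthogonal ideals. Since $W_i\subseteq U_i=\operatorname{supp}(y_i)$, cutting $y_i$ down to $J_i$ produces $z_i\in\Cu(J_i)$ with $z_i\le y_i$ and $\operatorname{supp}(z_i)=W_i$.

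Set $y:=z_1+\dots+z_n$. First, $2z_i\le 2y_i\le x$ and $2z_i\in\Cu(J_i)$, and since the $J_i$ are pairwise orthogonal, the standard orthogonal-decomposition argument for Cuntz classes gives $2y=2z_1+\dots+2z_n\le x$. Second, $\operatorname{supp}(y)=W_1\cup\dots\cup W_n=Y\supseteq\operatorname{supp}(x')$, and since $w\le\infty w'$ holds exactly when $\operatorname{supp}(w)\subseteq\operatorname{supp}(w')$, we get $x'\le\infty y$. So $y$ witnesses Glimm halving for $x'\ll x$; as this pair was arbitrary, $\Cu(A)$ has Glimm halving, and hence $A$ has the Global Glimm Property.

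The heart of the matter --- and the sole place topological dimension zero is used --- is the disjointification in the second step: the content of the theorem is precisely that the ability to refine finite open covers of $\operatorname{Prim}(A)$ into \emph{disjoint} ones is exactly what converts the finitely many $y_i$ (which only \emph{sum} above $x'$) into a single halving element. The other ingredients are routine in the Cuntz-semigroup theory of ideals (the support map being a well-behaved $\Cu$-morphism, cutting an element down to an ideal, and orthogonal pieces each below $x$ having sum below $x$; cf.\ \cite{AntPerThiVil24arX:PureCAlgs,ThiVil23Glimm}), as is the fact that topological dimension zero yields this disjoint-refinement property. One technical caveat: because $\operatorname{Prim}(A)$ is only $T_0$, compact-open sets need not be closed, so the argument must be phrased via the open-cover formulation of dimension zero rather than via complements of compact-open sets. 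Finally, since hereditary sub-\ca{s} of nowhere scattered algebras of topological dimension zero are again of this form, one may alternatively run the argument inside $\overline{aAa}$; this is the route that directly produces the almost full square-zero element $r\in\overline{aAa}$, through the usual passage from a Cuntz-semigroup halving to an actual nilpotent, recovering the real-rank-zero case of \cite{EllRor06Perturb} as a special instance.
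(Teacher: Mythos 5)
Your reduction to the Cuntz semigroup (nowhere scattered $\Leftrightarrow$ weak $(2,\omega)$-divisibility, Global Glimm Property $\Leftrightarrow$ $(2,\omega)$-divisibility) matches the paper, but the central step of your argument --- disjointifying the cover by the supports $U_i=\operatorname{supp}(y_i)$ --- rests on a false claim. Having a basis of compact open sets does \emph{not} imply that every finite open cover of an open subset of $\operatorname{Prim}(A)$ refines to a finite cover by pairwise disjoint open sets. Concretely, let $H=H_1\oplus H_2$ with $H_i$ infinite-dimensional and separable, let $A_0=\mathcal{K}(H)+\mathbb{C}P_1+\mathbb{C}P_2$ (with $P_i$ the projection onto $H_i$), and set $A=A_0\otimes\mathcal{O}_2$. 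Then $A$ is nowhere scattered, its ideal lattice is finite (so every ideal is a compact element of the lattice, every open subset of $\operatorname{Prim}(A)$ is compact, and $A$ has topological dimension zero), and $\operatorname{Prim}(A)=\{\eta,a,b\}$ with open sets $\emptyset$, $\{\eta\}$, $\{\eta,a\}$, $\{\eta,b\}$, $\{\eta,a,b\}$. Every nonempty open set contains the dense point $\eta$, so any two nonempty open sets meet; hence the cover $\{\{\eta,a\},\{\eta,b\}\}$ of $\operatorname{Prim}(A)$ admits no disjoint open refinement that still covers (a pairwise disjoint open cover would have to consist of the single set $\operatorname{Prim}(A)$, which is contained in neither member). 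Supports of the $y_i$ coming from weak $(2,\omega)$-divisibility can perfectly well form such a cover, so your construction of $y$ cannot get off the ground. The equivalence you assert is a Hausdorff phenomenon (where compact open $=$ clopen); in the $T_0$ setting compact open sets need not be closed and $\operatorname{Prim}(A)$ can be irreducible, which is exactly why the Global Glimm Problem is hard here.

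What the paper does instead is designed to merge the pieces \emph{without} separating their supports. It first reduces from $n$ summands to two elements $c,d$ with $2c,2d\le x$ and $x'\ll\infty(c+d)$ (via \cite[Proposition~6.2]{ThiVil23Glimm}, using ideal-filteredness). Algebraicity of $\Cu(A)\otimes\{0,\infty\}$ is used only to replace $c$ by an element $e$ generating a \emph{compact} ideal; the almost-algebraic-order axiom \axiomO{5} then yields a complement $f$ with $2e'+f\le x\le 2e+f$, and the relation $2d\le f+\infty e$ is \emph{lifted from the quotient} by the ideal generated by $e$ (\cite[Proposition~7.8]{ThiVil24NowhereScattered}) to produce $g$ with $2g\le f$ and $d'\ll g+\infty e$; the halving element is $y=e'+g$. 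This quotient-and-lift mechanism is the correct substitute for the disjointification you attempt, and your proof would need something of this kind to handle overlapping supports.
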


A \ca{} is said to have \emph{topological dimension zero} if its primitive ideal space has a basis consisting of compact, open sets.
This notion was introduced by Brown and Pedersen \cite[Remark~2.5(vi)]{BroPed09Limits} as a generalization of real rank zero. 
More generally, it is known that every \ca{} with real rank zero has the \emph{ideal property} (that is, its projections separate its closed ideals), and that every \ca{} with the ideal property has topological dimension zero.

\autoref{prp:thmIntro} both recovers and extends previous solutions of the Global Glimm Problem.
First, since real rank zero implies topological dimension zero, our result subsumes the real rank zero case established in \cite{EllRor06Perturb}.
Second, because Hausdorff zero-dimensional primitive ideal space implies topological dimension zero, we partially recover the Hausdorff finite-dimensional primitive ideal space case treated in \cite{BlaKir04GlimmHalving}.
Beyond these cases, \autoref{prp:thmIntro} solves the Global Glimm Problem for classes of \ca{s} not covered by previous results in the literature.
For example, Zhang \cite{Zha90RieszDecomp} showed that multiplier algebras of $\sigma$-unital, real rank zero \ca{s} have topological dimension zero, and thus fall within the scope of \autoref{prp:thmIntro}.
However, such multiplier algebras are not encompassed by earlier results, since they need not have real rank zero, Hausdorff primitive ideal space, or stable rank one (see \autoref{rmk:MultNotRR0}).

\medskip

As a corollary of \autoref{prp:thmIntro} we obtain that nowhere scattered \ca{s} with finite nuclear dimension and topological dimension zero are pure (\autoref{prp:Pure}). 
Further, using results by Zhang \cite{Zha90RieszDecomp} and the third-named author \cite{Vil23arX:NWSMultCAlg}, we prove that $\sigma$-unital, purely infinite \ca{s} of real rank zero have purely infinite multiplier algebras (\autoref{prp:MultPI}), thereby partially resolving a problem from \cite{KirRor02InfNonSimpleCalgAbsOInfty}.
We also recover the result of Elliott and Rouzbehani \cite{EllRou23WkPITopDimZero} that a weakly purely infinite \ca{} with topological dimension zero is purely infinite (\autoref{prp:PI}).

\section{The proofs} 

Our results heavily rely on Cuntz semigroup techniques. 
The reader is referred to \cite{GarPer23arX:ModernCu} for an introduction to the subject.

As shown in \cite[Corollary~4.3]{PasRor07PIRR0}, a separable \ca{} has topological dimension zero if and only if $A\otimes\mathcal{O}_2$ has real rank zero. 
This was used in \cite[Proposition~4.18]{ThiVil23Glimm} to prove that a separable \ca{} $A$ has topological dimension zero if and only if $\Cu(A)\otimes\{ 0,\infty\}$ is algebraic, where we  recall that a Cuntz semigroup is \emph{algebraic} if the set of elements $x$ satisfying $x\ll x$ is sup-dense in $\Cu(A)$. 
Here, we use the tensor product of abstract Cuntz semigroups developed in \cite{AntPerThi18TensorProdCu}, and we note that $\Cu (A\otimes\mathcal{O}_2)\cong \Cu (A)\otimes\{ 0,\infty\}$ by \cite[Corollary~7.2.15]{AntPerThi18TensorProdCu}.

In the next result we remove the separability assumption.

\begin{prp}
\label{prp:TopDim0iffAlg}
Let $A$ be a \ca{}. 
Then, $A$ has topological dimension zero if and only if $\Cu(A)\otimes\{0,\infty\}$ is algebraic.
\end{prp}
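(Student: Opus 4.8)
The plan is to avoid separability altogether by passing through the ideal lattice, rather than by reducing to \cite[Proposition~4.18]{ThiVil23Glimm}. The starting point is that topological dimension zero is an order-theoretic condition: under the identification of $\mathrm{Id}(A)$ with the frame $\mathcal{O}(\mathrm{Prim}(A))$ of open sets, a set is topologically compact exactly when it is way-below itself with respect to upward-directed suprema, and ``having a basis of compact-open sets'' says precisely that every ideal is the directed supremum of the compact ideals below it; hence $A$ has topological dimension zero if and only if $\mathrm{Id}(A)$ is an algebraic lattice. Now combine the standard isomorphism $\mathrm{Lat}(\Cu(B))\cong\mathrm{Id}(B)$ with $\Cu(A)\otimes\{0,\infty\}\cong\Cu(A\otimes\mathcal{O}_2)$ (\cite[Corollary~7.2.15]{AntPerThi18TensorProdCu}) and the fact that $\mathrm{Id}(A\otimes\mathcal{O}_2)\cong\mathrm{Id}(A)$ (as $\mathcal{O}_2$ is simple and nuclear), to get $\mathrm{Lat}(\Cu(A)\otimes\{0,\infty\})\cong\mathrm{Id}(A)$. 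This reduces \autoref{prp:TopDim0iffAlg} to the purely semigroup-theoretic assertion that, for $S:=\Cu(A)\otimes\{0,\infty\}$, the \CuSgp{} $S$ is algebraic if and only if the lattice $\mathrm{Lat}(S)$ is algebraic.

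One direction of this is general and formal: for \emph{any} algebraic \CuSgp{} $S$, the lattice $\mathrm{Lat}(S)$ is algebraic. Indeed, if $c\ll c$ in $S$ then the ideal $S_c$ it generates is a compact element of $\mathrm{Lat}(S)$ (because Cu-morphisms, and in particular the defining description of $\bigvee$ of an increasing chain of ideals, preserve compact containment), and by algebraicity of $S$ every ideal is the directed supremum of the ideals generated by its elements, all of which are compact. So $\mathrm{Lat}(S)$ is compactly generated, i.e.\ algebraic.

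The converse is where the special structure of $S=\Cu(A)\otimes\{0,\infty\}$ is used, and this is the heart of the proof. Every element of $S$ is idempotent: writing $x\in S$ as a supremum of an increasing sequence of finite sums of simple tensors $t\otimes\infty$ and using $\infty+\infty=\infty$ together with bi-additivity and order-continuity of $+$, one obtains $x+x=x$. Thus $S$ is a Cu-semilattice ($x+y=x\vee y$), and consequently, for each $s\in S$, the principal down-set $I_s=\{y\in S:y\le s\}$ is a Cu-ideal with maximum $s$. Assuming $\mathrm{Lat}(S)$ algebraic, one writes $I_s$ as a directed supremum of compact ideals $K\subseteq I_s$; the semilattice structure forces each such $K$ to be of the form $I_{c_K}$ for its maximum $c_K$, and a short computation (using $I_{\sup_n c_n}=\bigvee_n I_{c_n}$ and compactness of $K$ in $\mathrm{Lat}(S)$) shows $c_K\ll c_K$ in $S$. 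Unfolding $s\in I_s=\bigvee_K I_{c_K}$ then yields an increasing sequence $(y_n)$ with $y_n\le c_{K_n}$ and $\sup_n y_n\ge s$; setting $d_n:=c_{K_1}+\dots+c_{K_n}$ produces an increasing sequence of compact elements of $S$ with $d_n\le s$ and $\sup_n d_n=s$. Since $s$ was arbitrary, $S$ is algebraic.

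I expect the converse implication to be the main obstacle, since algebraicity of the ideal lattice does \emph{not} imply algebraicity of a general \CuSgp{} (for instance $[0,\infty]$), so the argument must genuinely exploit that every element of $\Cu(A)\otimes\{0,\infty\}$ is idempotent in order to convert compact \emph{ideals} into compact \emph{elements}; a minor secondary point is to reconcile the directed notion of compactness used for $\mathrm{Prim}(A)$ with the ``increasing sequence'' notion used in \CuSgp{s}, which is handled by the fact that each $I_s$ is generated by a single element. As an alternative, one could write $A$ as an inductive limit of its separable sub-\ca{s}, use continuity of $\Cu$ and \cite[Corollary~7.2.15]{AntPerThi18TensorProdCu} to get $\Cu(A)\otimes\{0,\infty\}\cong\varinjlim\bigl(\Cu(A_i)\otimes\{0,\infty\}\bigr)$, and combine \cite[Proposition~4.18]{ThiVil23Glimm} with the observation that an inductive limit of algebraic \CuSgp{s} is algebraic; the reverse direction, however, would then require a Löwenheim--Skolem argument producing a cofinal family of separable sub-\ca{s} with topological dimension zero, which the ideal-lattice approach sidesteps.
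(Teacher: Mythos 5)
Your argument is essentially correct, but it takes a genuinely different route from the paper. The paper's proof of the hard direction (topological dimension zero implies $\Cu(A)\otimes\{0,\infty\}$ algebraic) never leaves $\Cu(A)$: it uses \cite[Lemma~7.12]{RobTik17NucDimNonSimple} to show that elements of $\Cu(A)$ with a \emph{pseudocompact} representative (i.e.\ $[d]\le n[(d-\delta)_+]$) are sup-dense, extracts from such a $d$ a pair $y'=[(d-\delta)_+]\ll y=[(d-\delta/2)_+]$ with $y\le\infty y'$, and concludes via the criterion of \cite[Lemma~4.16]{ThiVil23Glimm}; the easy direction is quoted from \cite[Proposition~4.18]{ThiVil23Glimm}. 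You instead translate everything into lattice theory: topological dimension zero becomes algebraicity of $\mathrm{Id}(A)\cong\mathrm{Lat}(S)$ for $S=\Cu(A)\otimes\{0,\infty\}$, and the real content is your equivalence ``$S$ algebraic $\Leftrightarrow$ $\mathrm{Lat}(S)$ algebraic'' for idempotent $S$, whose nontrivial half correctly exploits that every element of $S$ is idempotent so that principal down-sets are ideals and compact ideals have compact maxima. Your approach is more structural and makes transparent \emph{why} the statement is about the ideal lattice, at the price of an extra identification; the paper's approach buys a useful byproduct (sup-density of pseudocompactly represented classes) and avoids any appeal to the computation of $\Cu(A\otimes\mathcal{O}_2)$ in the non-separable case. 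That last point is the one thing you should pin down: you invoke $\Cu(A)\otimes\{0,\infty\}\cong\Cu(A\otimes\mathcal{O}_2)$ and $\mathrm{Lat}(\Cu(B))\cong\mathrm{Id}(B)$ for a possibly non-separable $A$, whereas the paper only quotes \cite[Corollary~7.2.15]{AntPerThi18TensorProdCu} in its discussion of the separable case; since the whole point of the proposition is to remove separability, you should either check that the cited isomorphism holds in general (it does follow from continuity of $\Cu$, of $-\otimes\{0,\infty\}$, and of $-\otimes\mathcal{O}_2$ under inductive limits over separable subalgebras), or replace it by a direct identification of $\mathrm{Lat}(\Cu(A)\otimes\{0,\infty\})$ with $\mathrm{Lat}(\Cu(A))$. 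With that detail secured, your proof is a valid alternative.
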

\begin{proof}
Assume first that $\Cu(A)\otimes\{0,\infty\}$ is algebraic. 
It follows from \cite[Proposition~4.18]{ThiVil23Glimm} that $A$ has topological dimension zero.
(We note that the assumption of separability in \cite[Proposition~4.18]{ThiVil23Glimm} is not needed for this implication.)

\medskip

Conversely, assume that $A$ has topological dimension zero. 
Without loss of generality, we may also assume that $A$ is stable.
We will show that for every $x',x \in \Cu(A)$ with $x' \ll x$ there exist $y',y \in \Cu(A)$ such that $x' \ll y' \ll y \ll x$ and $y \ll \infty y'$.
It then follows from \cite[Lemma~4.16]{ThiVil23Glimm} that $\Cu(A)\otimes\{0,\infty\}$ is algebraic.

So let $x',x \in \Cu(A)$ satisfy $x' \ll x$.
Choose $a \in A_+$ and $\varepsilon>0$ such that
\[
x' \ll [(a-\varepsilon)_+], \andSep
x = [a].
\]

By \cite[Lemma~7.12]{RobTik17NucDimNonSimple}, for every $\varepsilon'>0$ the set of elements $c\in A_+$ such that $(c-\varepsilon')_+$ is pseudocompact is dense in $A_+$, where an element $d \in A_+$ is said to be \emph{pseudocompact} if there exist $\delta >0$ and $n\in\NN$ such that $[d]\leq n [(d-\delta )_+]$ in $\Cu (A)$. 
Applying this, we obtain $c \in A_+$ such that
\[
\| a - c \| < \frac{\varepsilon}{2}
\]
and such that $(c-\tfrac{\varepsilon}{2})_+$ is pseudocompact.
Using that $\| a - c \| < \tfrac{\varepsilon}{2}$, it follows from standard Cuntz semigroup techniques that
\[
(a-\varepsilon)_+ \precsim (c-\tfrac{\varepsilon}{2})_+ \precsim a.
\]
Thus, the pseudocompact element $d:=(c-\tfrac{\varepsilon}{2})_+$ satisfies $x' \ll [d] \leq x$.
(We have shown that elements in $\Cu(A)$ with a pseudocompact representative are sup-dense.)

Pick $\delta >0$ and $n\in\NN$ such that $[d]\leq n [(d-\delta )_+]$.
Using that $x' \ll [(a-\varepsilon)_+] \leq [d]$, we may assume that $\delta$ is so small that $x' \ll [(d-\delta )_+]$.
Set $y := [(d-\tfrac{\delta}{2})_+]$ and $y' := [(d-\delta )_+]$.
Then 
\[
x' \ll y' \ll y \ll [d] = [(c-\tfrac{\varepsilon}{2})_+] \leq [a] = x.
\]
Further, we have
\[
y \ll [d] \leq n[(d-\delta )_+] = ny' \leq \infty y',
\]
which shows that $y'$ and $y$ have the desired properties.
\end{proof}

As defined in \cite[Definition~5.1]{RobRor13Divisibility}, a \CuSgp{} $S$ is \emph{weakly $(2,\omega)$-divisible} if, for every $x',x\in S$ satisfying $x'\ll x$, there exist $n\in\NN$ and $d_1,\ldots ,d_n\in S$ such that 
\[
2d_1, \ldots, 2d_n \leq x, \andSep 
x'\leq d_1+\ldots +d_n.
\]

Further, $S$ is said to be \emph{$(2,\omega)$-divisible} if for every $x',x \in S$ satisfying $x' \ll x$ there exist $n\in\NN$ and $d\in S$ such that
\[
2d \leq x, \andSep 
x'\leq nd.
\]

As shown in \cite[Theorem~8.9]{ThiVil24NowhereScattered}, a \ca{} is nowhere scattered if and only if its Cuntz semigroup is weakly $(2,\omega)$-divisible.
Further, by \cite[Theorem~3.6]{ThiVil23Glimm}, a \ca{} has the Global Glimm Property if and only if its Cuntz semigroup is $(2,\omega)$-divisible. 
In what follows, we use these characterizations to prove \autoref{prp:thmIntro}.

We state \autoref{prp:GGP-Cu} below using the language of \CuSgp{s}, where properties \axiomO{5}–\axiomO{8} are always satisfied in the Cuntz semigroup of a \ca{} (see \cite{AntPerThi18TensorProdCu,AntPerRobThi21Edwards,ThiVil24NowhereScattered}). 
We recall \axiomO{5} at the point where it is used in the proof. 
The remaining properties \axiomO{6}-\axiomO{8} are required only for the referenced results and are therefore not recalled in this note.

\begin{lma}
\label{prp:GGP-Cu}
Let $S$ be a weakly $(2,\omega)$-divisible \CuSgp{} with \axiomO{5}-\axiomO{8}, and assume that $S \otimes \{0,\infty\}$ is algebraic.
Then $S$ is $(2,\omega)$-divisible.
\end{lma}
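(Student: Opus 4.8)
The plan is to reduce $(2,\omega)$-divisibility to a statement about single pairs $x'\ll x$ and then to manufacture one divisor $d$ out of the finitely many divisors supplied by weak $(2,\omega)$-divisibility, using the algebraicity of $S\otimes\{0,\infty\}$ to control the ideal-theoretic bookkeeping. First I would note that it suffices to prove: for all $x',x\in S$ with $x'\ll x$ there exist $n\in\NN$ and $d\in S$ with $2d\le x$ and $x'\le nd$. Fix such $x'\ll x$ and pick $x''$ with $x'\ll x''\ll x$. By the separability-free part of the argument behind \autoref{prp:TopDim0iffAlg} (i.e.\ \cite[Lemma~4.16 and Proposition~4.18]{ThiVil23Glimm}), the algebraicity of $S\otimes\{0,\infty\}$ yields $y',y\in S$ and $m\in\NN$ with $x''\ll y'\ll y\ll x$ and $y\le my'$. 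The point of $y',y$ is that they generate the same ideal of $S$ (as $y'\le y\le my'$) and $y$ sits ``compactly'' inside that ideal; in particular, if an element $d$ is full in $\operatorname{ideal}(y)$ then $y\le\infty d=\sup_k kd$, and since $y'\ll y$ this forces $y'\le kd$ for some $k$, whence $x'\ll x''\le y'\le kd$. Thus it will be enough to find a single $d$ with $2d\le x$ and $\operatorname{ideal}(d)=\operatorname{ideal}(y)$.

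Next I would apply weak $(2,\omega)$-divisibility to $y'\ll y$, obtaining $d_1,\dots,d_n\in S$ with $2d_i\le y$ for each $i$ and $y'\le d_1+\dots+d_n$. Since $y'$ is full in $\operatorname{ideal}(y)$, this gives $\operatorname{ideal}(y)=\bigvee_{i=1}^n\operatorname{ideal}(d_i)$. The naive choice $d:=d_1+\dots+d_n$ fails: it is full, but $2d$ is only bounded by $ny$, and $y\ll x$ gives no control over multiples of $y$; so the ideal structure has to be exploited. When $\operatorname{ideal}(y)$ is join-irreducible one wins immediately: some $d_{i_0}$ is full in $\operatorname{ideal}(y)$, and $d:=d_{i_0}$ satisfies $2d_{i_0}\le y\le x$ together with the fullness needed above. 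In general the plan is to use that $\operatorname{ideal}(y)$ is a \emph{compact} element of the algebraic Cu-semigroup $S\otimes\{0,\infty\}$ to decompose it, via (O5) (which we recall states that $a'\ll a\le b_1+b_2$ produces $c_i\le a,b_i$ with $a'\le c_1+c_2$), into finitely many sub-ideals on each of which a single full divisor with the required halving bound can be produced --- treating the ``join-irreducible-like'' pieces as above, and on ``orthogonal'' pieces genuinely summing the relevant $d_i$'s, which is legitimate precisely because the corresponding doublings $2d_i$ then land in disjoint summands of $y$ --- and finally to reassemble, taking $d$ to be the sum of the pieces and $k$ the maximum of the finitely many multiplicities.

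The step I expect to be the real obstacle is this last decomposition-and-reassembly: it must be carried out uniformly in the finitely many pieces, and synchronized with the fact that (O5) only yields refinements up to $\ll$ (the reason for introducing $x''$), so that the multiplicity bookkeeping closes up. It is exactly here that the algebraicity of $S\otimes\{0,\infty\}$ --- equivalently, the availability of bounded-multiplicity compact approximants at every scale, as in \autoref{prp:TopDim0iffAlg} --- is indispensable; by contrast, weak $(2,\omega)$-divisibility alone does not suffice, since otherwise the Global Glimm Problem would already be settled. (It is also plausible that the merging step can instead be quoted directly from the divisibility toolbox of \cite{ThiVil23Glimm}, in which case the proof reduces to verifying the relevant hypothesis with the help of \autoref{prp:TopDim0iffAlg}.)
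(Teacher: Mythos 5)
Your setup is fine and overlaps with the paper's: reducing to a single pair $x'\ll x$, using the algebraicity of $S\otimes\{0,\infty\}$ (via \cite[Lemma~4.16]{ThiVil23Glimm}) to produce approximants $y'\ll y$ generating a compact ideal, and then invoking weak $(2,\omega)$-divisibility. But the proof has a genuine gap exactly where you say you expect one: the step that merges the finitely many divisors $d_1,\dots,d_n$ into a single $d$ with $2d\le x$ is not carried out, and the mechanism you sketch for it does not work in general. The ideal generated by $y$ need not decompose into join-irreducible or pairwise orthogonal pieces --- the ideal lattice of a \CuSgp{} can be arbitrarily complicated even when $S\otimes\{0,\infty\}$ is algebraic --- and \axiomO{5} applied to $y'\ll y\le d_1+\dots+d_n$ only refines $y'$ into pieces sitting below the $d_i$, which reproduces the original problem rather than solving it. Since this merging step is precisely the content of the lemma (weak $(2,\omega)$-divisibility already hands you the $n$ divisors for free), the proposal as written does not constitute a proof.

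For comparison, the paper resolves the merge in two stages, neither of which appears in your sketch. First, \cite[Proposition~6.2]{ThiVil23Glimm} (applicable because algebraicity of $S\otimes\{0,\infty\}$ makes $S$ ideal-filtered, by \cite[Lemma~4.17]{ThiVil23Glimm}) reduces from $n$ divisors to \emph{two} elements $c,d$ with $2c,2d\le x$ and $x'\ll\infty(c+d)$. Second --- and this is the new content of the lemma --- the two are merged into one: one replaces $c$ by an element $e$ generating a compact ideal ($c'\ll e'\ll e\ll c$ with $e\ll\infty e'$), uses \axiomO{5} to find a complement $f$ with $2e'+f\le x\le 2e+f$, observes that $2d\le f+\infty e$, and then applies \cite[Proposition~7.8]{ThiVil24NowhereScattered} to lift this relation from the quotient by the ideal of $e$, producing $g$ with $2g\le f$ and $d'\ll g+\infty e$; the element $y:=e'+g$ then satisfies $2y\le 2e'+f\le x$ and $x'\ll\infty y$. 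The compactness of the ideal of $e$ (i.e.\ $e\ll\infty e'$) is what makes the fullness bookkeeping close up after the lift --- this is the actual role of algebraicity, rather than the ``bounded-multiplicity approximants'' you invoke. If you want to salvage your approach, you should replace the decomposition-and-reassembly step by these two citations and the quotient-lifting argument.
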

\begin{proof}
To show that $S$ is $(2,\omega)$-divisible, let $x',x \in S$ with $x' \ll x$.
We need to find $y \in S$ such that $2y \leq x$ and $x' \ll \infty y$.

By \cite[Lemma~4.17]{ThiVil23Glimm}, $S$ is ideal-filtered in the sense of \cite[Deﬁnition~4.1]{ThiVil23Glimm}.
We can therefore apply \cite[Proposition~6.2]{ThiVil23Glimm} for $m=2$ to obtain $c,d \in S$ such that
\[
2c, 2d \leq x, \andSep
x' \ll \infty(c+d).
\]
Choose $c',d' \in S$ such that $c' \ll c$, $d'\ll d$, and $x' \ll \infty(c'+d')$. Since $S\otimes\{0,\infty\}$ is algebraic, it follows from \cite[Lemma~4.16]{ThiVil23Glimm} that elements which generate a compact ideal are dense in $S$.
Specifically, applied for $c' \ll c$, we find $e',e \in S$ such that
\[
c' \ll e' \ll e \ll c, \andSep
e \ll \infty e'.
\]

The `almost algebraic order' property \axiomO{5} means that for all $s',s,t',t,u \in S$ satisfying $s' \ll s$, and $t' \ll t$, and $s+t \leq u$ there exists $v \in S$ such that $t' \ll v$ and $s'+v \leq u \leq s+v$.
Applying \axiomO{5} for $s'=2e'$, $s=2e$, $t'=t=0$ and $u=x$, we obtain $f\in S$ such that
\[
2e' + f 
\leq x
\leq 2e + f.
\]
Then 
\[
2d \leq x \leq f + 2e \leq f + \infty e.
\]

Using \cite[Proposition~7.8]{ThiVil24NowhereScattered} to `lift' the relation $2d\leq f +\infty e$ from the quotient by the ideal generated by $e$, we obtain $g \in S$ such that
\[
2g \leq f, \andSep
d' \ll g + \infty e.
\]
Set $y := e'+g$.
Then
\[
2y 
\leq 2e'+2g
\leq 2e'+f
\leq x.
\]
Further, we have
\[
x' 
\ll \infty(c'+d')
\leq \infty(e'+g+e)
\leq \infty(e'+g)
= \infty y,
\]
as desired.
\end{proof}

\begin{thm}
\label{prp:GGP}
Let $A$ be a \ca{} with topological dimension zero.
Then $A$ has the Global Glimm Property if and only if $A$ is nowhere scattered.
\end{thm}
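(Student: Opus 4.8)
The plan is to reduce everything to the Cuntz-semigroup statements already assembled in the excerpt and then simply quote \autoref{prp:GGP-Cu}. First I would recall that, by \cite[Theorem~3.6]{ThiVil23Glimm}, $A$ has the Global Glimm Property if and only if $\Cu(A)$ is $(2,\omega)$-divisible, and that, by \cite[Theorem~8.9]{ThiVil24NowhereScattered}, $A$ is nowhere scattered if and only if $\Cu(A)$ is weakly $(2,\omega)$-divisible. Since every $(2,\omega)$-divisible \CuSgp{} is in particular weakly $(2,\omega)$-divisible (take $d_1=\dots=d_n:=d$), the forward implication ``Global Glimm Property $\Rightarrow$ nowhere scattered'' is immediate and requires no use of the topological dimension hypothesis.

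For the converse, suppose $A$ has topological dimension zero and is nowhere scattered. Then $\Cu(A)$ is weakly $(2,\omega)$-divisible by the above, it satisfies \axiomO{5}-\axiomO{8} because it is the Cuntz semigroup of a \ca{} (see \cite{AntPerThi18TensorProdCu,AntPerRobThi21Edwards,ThiVil24NowhereScattered}), and $\Cu(A)\otimes\{0,\infty\}$ is algebraic by \autoref{prp:TopDim0iffAlg}. Hence $S:=\Cu(A)$ meets all the hypotheses of \autoref{prp:GGP-Cu}, which yields that $\Cu(A)$ is $(2,\omega)$-divisible. Translating back through \cite[Theorem~3.6]{ThiVil23Glimm}, $A$ has the Global Glimm Property, completing the proof.

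I do not expect a genuine obstacle here: the substantive arguments---removing separability from the ``topological dimension zero $\Leftrightarrow$ algebraicity of $\Cu(A)\otimes\{0,\infty\}$'' equivalence, and upgrading weak $(2,\omega)$-divisibility to $(2,\omega)$-divisibility under the algebraicity assumption---have already been carried out in \autoref{prp:TopDim0iffAlg} and \autoref{prp:GGP-Cu}. The only point worth a word of care is making sure the cited Cuntz-semigroup characterizations of nowhere scatteredness and of the Global Glimm Property do not themselves secretly require separability; they do not, so the theorem holds for arbitrary \ca{s} of topological dimension zero.
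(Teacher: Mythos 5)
Your proposal is correct and follows essentially the same route as the paper: the forward implication is noted to hold in general, and the converse is obtained by combining the Cuntz-semigroup characterizations of nowhere scatteredness and the Global Glimm Property with \autoref{prp:TopDim0iffAlg} and \autoref{prp:GGP-Cu}. The only difference is that you spell out why $(2,\omega)$-divisibility implies weak $(2,\omega)$-divisibility, which the paper leaves implicit.
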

\begin{proof}
The forward implication holds in general.
For the converse, assume that $A$ is nowhere scattered.
Then $\Cu(A)$ is weakly $(2,\omega)$-divisible by \cite[Theorem~8.9]{ThiVil24NowhereScattered}.
Further, the Cuntz semigroup of every \ca{} satisfies \axiomO{5}-\axiomO{8}.
Since $A$ has topological dimension zero, $\Cu(A)\otimes\{0,\infty\}$ is algebraic by \autoref{prp:TopDim0iffAlg}.

We can therefore apply \autoref{prp:GGP-Cu} to deduce that $\Cu(A)$ is $(2,\omega)$-divisible.
Then, by \cite[Theorem~3.6]{ThiVil23Glimm}, $A$ has the Global Glimm Property.
\end{proof}

In the rest of the paper we give some applications of \autoref{prp:GGP}.

\begin{cor}
\label{prp:Pure}
Let $A$ be a nowhere scattered \ca{} with finite nuclear dimension and topological dimension zero.
Then $A$ is pure. 
\end{cor}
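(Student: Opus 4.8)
The plan is to verify directly that $\Cu(A)$ is almost unperforated and almost divisible, since a \ca{} is \emph{pure} precisely when its Cuntz semigroup has both of these properties. The almost divisibility will be extracted from the Global Glimm Property via \autoref{prp:GGP}, and the almost unperforation will be extracted from the finite nuclear dimension hypothesis.

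First I would note that, since $A$ is nowhere scattered and has topological dimension zero, \autoref{prp:GGP} shows that $A$ has the Global Glimm Property, which by \cite[Theorem~3.6]{ThiVil23Glimm} means that $\Cu(A)$ is $(2,\omega)$-divisible. Next I would invoke the regularity consequences of finite nuclear dimension in the non-simple setting: by the results of Robert and of Robert--Tikuisis \cite{RobTik17NucDimNonSimple}, a \ca{} of nuclear dimension at most $m$ has $m$-comparison, and this weak comparison property can be promoted to genuine almost unperforation of $\Cu(A)$ once one knows that $\Cu(A)$ is sufficiently divisible --- which the previous step supplies. Finally, in an almost unperforated \CuSgp{} one can upgrade $(2,\omega)$-divisibility to almost divisibility: comparison allows one to replace approximate halvings $2d \le x$, $x' \le nd$ by exact halvings up to the relation $\ll$, iteration then yields $2^{k}$-divisibility for every $k$, and a further appeal to comparison interpolates to arbitrary $k$. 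With both properties established, $A$ is pure. Alternatively, the packaged statement that finite nuclear dimension together with the Global Glimm Property implies purity is available in \cite{AntPerThiVil24arX:PureCAlgs}, so once the Global Glimm Property has been obtained from \autoref{prp:GGP} one may simply cite it.

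The main obstacle is not \autoref{prp:GGP} but the interface with nuclear dimension: one must check that the $(2,\omega)$-divisibility produced by \autoref{prp:GGP} is exactly the hypothesis needed to turn the $m$-comparison coming from finite nuclear dimension into strict comparison and then into almost divisibility, and one must confirm that the non-simple nuclear-dimension results being quoted carry no hidden separability or simplicity assumptions.
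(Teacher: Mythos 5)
Your proposal is correct and, via the alternative you mention at the end, coincides exactly with the paper's proof: apply \autoref{prp:GGP} to get the Global Glimm Property and then cite the result of \cite{AntPerThiVil24arX:PureCAlgs} (Theorem~6.5 there) that finite nuclear dimension together with the Global Glimm Property implies purity. Your primary ``direct'' route is essentially an unpacking of that cited theorem and is left at sketch level, so the clean argument is the two-line one you already identified.
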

\begin{proof}
The \ca{} $A$ has the Global Glimm Property by \autoref{prp:GGP}. Thus, the result follows from \cite[Theorem~6.5]{AntPerThiVil24arX:PureCAlgs}.
\end{proof}

In \cite[Proposition~4.15]{KirRor02InfNonSimpleCalgAbsOInfty}, Kirchberg and R{\o}rdam show that a \ca{} is purely infinite if and only if it is weakly purely infinite and has the Global Glimm Property, and they asked if every weakly purely infinite \ca{} must in fact be purely infinite (\cite[Question~9.5]{KirRor02InfNonSimpleCalgAbsOInfty}).
This question was settled affirmatively by Elliott and Rouzbehani  \cite{EllRou23WkPITopDimZero} for \ca{s} with topological dimension zero, a class that includes both the simple and the real rank zero cases already treated in \cite{KirRor02InfNonSimpleCalgAbsOInfty}.
We recover their result.

\begin{cor}[{Elliott-Rouzbehani \cite{EllRou23WkPITopDimZero}}]
\label{prp:PI}
Let $A$ be a \ca{} with topological dimension zero.
Then $A$ is purely infinite if and only if $A$ is weakly purely infinite.
\end{cor}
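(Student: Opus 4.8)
The plan is to reduce the statement to the key equivalence of Kirchberg–R\o{}rdam together with the main theorem \autoref{prp:GGP}. Recall from \cite[Proposition~4.15]{KirRor02InfNonSimpleCalgAbsOInfty} that a \ca{} is purely infinite if and only if it is weakly purely infinite and has the Global Glimm Property. One implication of the corollary is immediate: if $A$ is purely infinite, then in particular it is weakly purely infinite, with no hypothesis on the topological dimension needed.

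For the converse, suppose $A$ has topological dimension zero and is weakly purely infinite. First I would observe that a weakly purely infinite \ca{} is nowhere scattered: indeed, if a hereditary sub-\ca{} $B \subseteq A$ admitted a finite-dimensional irreducible representation, then $B$ — and hence $A$ — would have a nonzero quotient isomorphic to a matrix algebra $M_k$, which is finite and therefore not weakly purely infinite, contradicting the fact that weak pure infiniteness passes to quotients and hereditary sub-\ca{s}. (Alternatively, one can cite the characterizations of nowhere scatteredness in \cite{ThiVil24NowhereScattered}, since a weakly purely infinite algebra has no nonzero elementary quotients of its hereditary sub-\ca{s}.) Since $A$ additionally has topological dimension zero, \autoref{prp:GGP} applies and shows that $A$ has the Global Glimm Property.

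Finally, combining that $A$ is weakly purely infinite with the fact that $A$ has the Global Glimm Property, \cite[Proposition~4.15]{KirRor02InfNonSimpleCalgAbsOInfty} yields that $A$ is purely infinite, completing the proof.

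I do not anticipate a genuine obstacle here; the only point requiring a moment's care is the implication "weakly purely infinite $\Rightarrow$ nowhere scattered," which should be recorded cleanly (either via the permanence properties of weak pure infiniteness under passing to hereditary sub-\ca{s} and quotients, or by invoking \cite{ThiVil24NowhereScattered}). Everything else is a direct citation of \autoref{prp:GGP} and of \cite[Proposition~4.15]{KirRor02InfNonSimpleCalgAbsOInfty}.
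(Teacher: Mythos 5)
Your proposal is correct and follows essentially the same route as the paper: the forward direction is immediate, and the converse combines the fact that weakly purely infinite \ca{s} are nowhere scattered (the paper cites \cite[Example~3.3]{ThiVil24NowhereScattered} for this) with \autoref{prp:GGP} and \cite[Proposition~4.15]{KirRor02InfNonSimpleCalgAbsOInfty}. Your inline justification of the nowhere-scatteredness step via permanence properties of weak pure infiniteness is a reasonable substitute for that citation.
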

\begin{proof}
The forward implication holds in general.
For the converse, assume that $A$ is weakly purely infinite.
Then $A$ is nowhere scattered (\cite[Example~3.3]{ThiVil24NowhereScattered}), and therefore has the Global Glimm Property by \autoref{prp:GGP}.
Now it follows from \cite[Proposition~4.15]{KirRor02InfNonSimpleCalgAbsOInfty} that $A$ is purely infinite.
\end{proof}

We now turn to applications of \autoref{prp:GGP} to multiplier algebras, which rest on the following result of Zhang.

\begin{thm}[Zhang]
\label{prp:Zhang}
Let $A$ be a $\sigma$-unital \ca{} of real rank zero.
Then its multiplier algebra $\mathcal{M}(A)$ has topological dimension zero.
\end{thm}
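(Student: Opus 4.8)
The plan is to reduce the statement to a result of Zhang together with a soft general fact. Recall that a \ca{} $B$ has the \emph{ideal property} if every closed two-sided ideal of $B$ is the closed linear span of the projections it contains. As noted in the introduction, this forces $B$ to have topological dimension zero, and the implication is elementary: for a projection $p\in B$ the set $U_p:=\{\pi\in\mathrm{Prim}(B):\pi(p)\neq 0\}$ is open --- it corresponds to the ideal $\overline{BpB}$ --- and it is moreover compact, since $\overline{BpB}$ is a compact element of the lattice of closed ideals of $B$: the corner $pBp$ is unital, so whenever $\overline{BpB}$ is realized as a join of ideals, $p$ already belongs to a finite subjoin. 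If the projections of $B$ generate all of its ideals, then every open subset of $\mathrm{Prim}(B)$ is a union of sets $U_p$, so these form a basis of compact-open sets. Hence it suffices to show that $\mathcal{M}(A)$ has the ideal property.

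This is where Zhang's work enters. When $A$ is $\sigma$-unital with real rank zero --- the hypothesis that Zhang states via the (FS) property, equivalent to real rank zero by Brown and Pedersen --- the paper \cite{Zha90RieszDecomp} establishes a Riesz decomposition property for the projections of $\mathcal{M}(A)$ and deduces from it a description of the closed two-sided ideals of $\mathcal{M}(A)$; in particular, each such ideal is generated by the projections it contains. The hypothesis of $\sigma$-unitality enters through the existence of an increasing approximate unit of projections inside $A$, on which the relevant lifting and decomposition arguments rest. Combining this with the previous paragraph shows that $\mathcal{M}(A)$ has the ideal property, and hence topological dimension zero.

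I would not attempt to reprove Zhang's theorem; that is where the real content lies, and the main obstacle is genuinely there. The delicate point is that the ideal lattice of $\mathcal{M}(A)$ is typically far larger and more intricate than that of $A$ --- heuristically it records the behaviour ``at infinity'' of an approximate unit of $A$ --- so it does not suffice to produce a handful of obvious projections in $\mathcal{M}(A)$: one must show that an \emph{arbitrary} closed ideal, which a priori might contain too few projections, is in fact spanned by them, and this is precisely what Zhang's Riesz decomposition for projections of $\mathcal{M}(A)$ supplies. (Alternatively one could invoke \autoref{prp:TopDim0iffAlg} and verify that $\Cu(\mathcal{M}(A))\otimes\{0,\infty\}$ is algebraic, but the ideal-property route is the most economical way to harvest Zhang's results.)
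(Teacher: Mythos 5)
Your proposal is correct and follows essentially the same route as the paper: invoke Zhang's theorem that every closed ideal of $\mathcal{M}(A)$ is the closed linear span of its projections, conclude that $\mathcal{M}(A)$ has the ideal property, and then pass to topological dimension zero. The only cosmetic difference is that you prove the elementary implication from the ideal property to topological dimension zero directly (via compactness of the open sets determined by projections), whereas the paper simply cites \cite[Theorem~2.8]{PasPhi17WeakIPTopDimZero} for it.
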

\begin{proof}
By \cite[Theorem~2.2]{Zha90RieszDecomp}, every closed ideal in $\mathcal{M}(A)$ is the closed linear span of its projections.
Consequently, projections in $\mathcal{M}(A)$ separate closed ideals, which means that $\mathcal{M}(A)$ has the ideal property.
This implies that $\mathcal{M}(A)$ has topological dimension zero;
see, for example, \cite[Theorem~2.8]{PasPhi17WeakIPTopDimZero}.
\end{proof}

\begin{cor}
Let $A$ be a $\sigma$-unital, nowhere scattered \ca{} of real rank zero.
Then its multiplier algebra $\mathcal{M}(A)$ has the Global Glimm Property.
\end{cor}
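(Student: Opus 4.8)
The plan is to check the two hypotheses of \autoref{prp:GGP} for the multiplier algebra $\mathcal{M}(A)$, and then apply that theorem. Since $A$ is $\sigma$-unital of real rank zero, Zhang's \autoref{prp:Zhang} already gives that $\mathcal{M}(A)$ has topological dimension zero. It therefore remains to show that $\mathcal{M}(A)$ is nowhere scattered; granting this, \autoref{prp:GGP} applied to $\mathcal{M}(A)$ immediately yields the Global Glimm Property.

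To obtain nowhere scatteredness of $\mathcal{M}(A)$, I would invoke the results of the third-named author in \cite{Vil23arX:NWSMultCAlg}: for a $\sigma$-unital \ca{} $A$ --- where one uses that real rank zero supplies an increasing approximate unit consisting of projections --- nowhere scatteredness of $A$ transfers to its multiplier algebra $\mathcal{M}(A)$, and in fact also to the corona $\mathcal{M}(A)/A$. If one prefers a Cuntz-semigroup argument, one combines the characterization of nowhere scatteredness by weak $(2,\omega)$-divisibility of the Cuntz semigroup (\cite[Theorem~8.9]{ThiVil24NowhereScattered}) with the analysis in \cite{Vil23arX:NWSMultCAlg} of the divisibility of (projections in) $\mathcal{M}(A)$, so as to rule out finite-dimensional irreducible representations of hereditary sub-\ca{s} of $\mathcal{M}(A)$.

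I expect the middle step to be the only genuine obstacle: nowhere scatteredness does not pass to multiplier algebras for free, and $\mathcal{M}(A)$ is typically far from separable and has a delicate ideal lattice. What makes it work is precisely that real rank zero of $A$ provides enough projections for the transfer machinery of \cite{Vil23arX:NWSMultCAlg} to apply. Once $\mathcal{M}(A)$ is known to be both of topological dimension zero and nowhere scattered, the conclusion is a direct application of \autoref{prp:GGP}.
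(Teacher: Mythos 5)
Your proposal is correct and follows exactly the paper's own argument: Zhang's result (\autoref{prp:Zhang}) gives topological dimension zero for $\mathcal{M}(A)$, the transfer of nowhere scatteredness to $\mathcal{M}(A)$ is precisely \cite[Theorem~5.12]{Vil23arX:NWSMultCAlg} (where, as you anticipate, the real rank zero hypothesis is what makes the transfer go through), and then \autoref{prp:GGP} concludes.
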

\begin{proof}
By \autoref{prp:Zhang}, $\mathcal{M}(A)$ has topological dimension zero. 
Further, we know from \cite[Theorem~5.12]{Vil23arX:NWSMultCAlg} that $\mathcal{M}(A)$ is nowhere scattered. 
Using \autoref{prp:GGP}, we get that $\mathcal{M}(A)$ satisfies the Global Glimm Property.
\end{proof}

In \cite[Proposition~4.11]{KirRor02InfNonSimpleCalgAbsOInfty}, Kirchberg and R{\o}rdam show that if $A$ is a $\sigma$-unital, purely infinite \ca{}, then its multiplier algebra $\mathcal{M}(A)$ is weakly purely infinite, and they raise the question if $\mathcal{M}(A)$ is purely infinite.
We resolve this problem in the case that $A$ has real rank zero.

\begin{cor}
\label{prp:MultPI}
Let $A$ be a $\sigma$-unital, purely infinite \ca{} of real rank zero,
Then $\mathcal{M}(A)$ is purely infinite.
\end{cor}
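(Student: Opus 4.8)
The plan is to assemble three facts that are already available. Kirchberg and R\o{}rdam proved in \cite[Proposition~4.11]{KirRor02InfNonSimpleCalgAbsOInfty} that the multiplier algebra of a $\sigma$-unital, purely infinite \ca{} is weakly purely infinite; Zhang's theorem, in the form of \autoref{prp:Zhang}, tells us that $\mathcal{M}(A)$ has topological dimension zero whenever $A$ is $\sigma$-unital of real rank zero; and \autoref{prp:PI} upgrades weak pure infiniteness to pure infiniteness for any \ca{} with topological dimension zero. The corollary follows by chaining these together.

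Concretely, I would argue as follows. Since $A$ is $\sigma$-unital and purely infinite, \cite[Proposition~4.11]{KirRor02InfNonSimpleCalgAbsOInfty} shows that $\mathcal{M}(A)$ is weakly purely infinite. Since $A$ is $\sigma$-unital and has real rank zero, \autoref{prp:Zhang} shows that $\mathcal{M}(A)$ has topological dimension zero. Applying \autoref{prp:PI} to the \ca{} $\mathcal{M}(A)$ then yields that $\mathcal{M}(A)$ is purely infinite, which is the assertion.

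I do not expect a genuine obstacle here: the statement is a formal consequence of results already established, and all the substantive work is hidden inside \autoref{prp:GGP} (which feeds \autoref{prp:PI}) and inside \autoref{prp:Zhang}. The one point worth noting is that \autoref{prp:PI} is invoked for the multiplier algebra $\mathcal{M}(A)$, which is typically non-separable; this causes no difficulty, since \autoref{prp:PI} is stated for an arbitrary \ca{} with topological dimension zero, with no separability, unitality, or $\sigma$-unitality hypothesis. Thus the proof is a three-line citation argument.
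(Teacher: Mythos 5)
Your proposal is correct and follows exactly the same route as the paper: cite \cite[Proposition~4.11]{KirRor02InfNonSimpleCalgAbsOInfty} for weak pure infiniteness of $\mathcal{M}(A)$, \autoref{prp:Zhang} for topological dimension zero, and then conclude via \autoref{prp:PI}. Your remark that \autoref{prp:PI} carries no separability or $\sigma$-unitality hypothesis, so it applies to the non-separable algebra $\mathcal{M}(A)$, is exactly the point that makes the argument go through.
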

\begin{proof}
By \autoref{prp:Zhang}, $\mathcal{M}(A)$ has topological dimension zero. 
Further, $\mathcal{M}(A)$ is weakly purely infinite by \cite[Proposition~4.11]{KirRor02InfNonSimpleCalgAbsOInfty}.
Now it follows from \autoref{prp:PI} that $\mathcal{M}(A)$ is purely infinite.
\end{proof}

\begin{rmk}
\label{rmk:MultNotRR0}
Let $A$ be a $\sigma$-unital, purely infinite \ca{} of real rank zero. 
Then~$\mathcal{M}(A)$ is purely infinite and has topological dimension zero by \autoref{prp:Zhang} and \autoref{prp:MultPI}.
However, $\mathcal{M}(A)$ need not have real rank zero.
For example, if~$Q$ denotes the Calkin algebra, then the stabilization $Q\otimes\mathcal{K}$ is a $\sigma$-unital, purely infinite \ca{} of real rank zero, but $\mathcal{M}(Q\otimes\mathcal{K})$ has real rank one by \cite[Example~3.4]{Thi24arX:RRMult}.
\end{rmk}


\providecommand{\bysame}{\leavevmode\hbox to3em{\hrulefill}\thinspace}
\providecommand{\noopsort}[1]{}
\providecommand{\mr}[1]{\href{http://www.ams.org/mathscinet-getitem?mr=#1}{MR~#1}}
\providecommand{\zbl}[1]{\href{http://www.zentralblatt-math.org/zmath/en/search/?q=an:#1}{Zbl~#1}}
\providecommand{\jfm}[1]{\href{http://www.emis.de/cgi-bin/JFM-item?#1}{JFM~#1}}
\providecommand{\arxiv}[1]{\href{http://www.arxiv.org/abs/#1}{arXiv~#1}}
\providecommand{\doi}[1]{\url{http://dx.doi.org/#1}}
\providecommand{\MR}{\relax\ifhmode\unskip\space\fi MR }
\providecommand{\MRhref}[2]{%
  \href{http://www.ams.org/mathscinet-getitem?mr=#1}{#2}
}
\providecommand{\href}[2]{#2}

\end{document}